\DeclareMathOperator{\ann}{ann}
\DeclareMathOperator{\cid}{CI-dim}
\DeclareMathOperator{\cidim}{CI-dim}
\DeclareMathOperator{\coker}{Coker}
\DeclareMathOperator{\depth}{depth}
\DeclareMathOperator{\Ext}{Ext}
\DeclareMathOperator{\gdim}{G-dim}
\DeclareMathOperator{\Hom}{Hom}
\DeclareMathOperator{\id}{id}
\DeclareMathOperator{\Image}{Image}
\DeclareMathOperator{\Ker}{Ker}
\DeclareMathOperator{\Max}{MaxSpec}
\DeclareMathOperator{\MaxSpec}{MaxSpec}
\DeclareMathOperator{\pd}{pd}
\DeclareMathOperator{\Tor}{Tor}
\renewcommand{\ge}{\geqslant}
\renewcommand{\le}{\leqslant}
\newcommand{\fm}{\mathfrak{m}}
\renewcommand{\iff}{if and only if }
\theoremstyle{plain}
\newtheorem{theorem}{Theorem}[section]
\newtheorem{lemma}[theorem]{Lemma}
\newtheorem{proposition}[theorem]{Proposition}
\newtheorem{corollary}[theorem]{Corollary}
\newenvironment{customtheorem}[1]
{\innercustomtheorem}
{\endinnercustomtheorem}
\theoremstyle{definition}
\newtheorem{conjecture}[theorem]{Conjecture}
\newtheorem{example}[theorem]{Example}
\newtheorem{para}[theorem]{}
\theoremstyle{remark}
\newtheorem{remark}[theorem]{Remark}
\numberwithin{equation}{section}
\title[Auslander-Reiten conjecture and complete intersection dimension]{Auslander-Reiten conjecture for modules whose (self) dual has finite complete intersection dimension}
\author[D.~Ghosh]{Dipankar Ghosh}
\address{Department of Mathematics, Indian Institute of Technology Kharagpur, West Bengal - 721302, India}
\email{dipankar@maths.iitkgp.ac.in, dipug23@gmail.com}
\urladdr{\url{https://orcid.org/0000-0002-3773-4003}}
\author[M.~Samanta]{Mouma Samanta}
\address{Department of Mathematics, Indian Institute of Technology Kharagpur, West Bengal - 721302, India}
\email{mouma17@kgpian.iitkgp.ac.in, samantastm17@gmail.com}
\subjclass[2020]{Primary 13D07, 13D05}
\keywords{Auslander-Reiten conjecture; Vanishing of Ext; Complete intersection dimension; Gorenstein dimension}
\begin{document}

\pagenumbering{arabic}
\thispagestyle{empty}
\begin{abstract}
Over a commutative Noetherian ring, we show that the Auslander-Reiten conjecture holds true for the class of (finitely generated) modules whose dual has finite complete intersection dimension. We provide another result that validates the conjecture for the class of modules whose self dual has finite complete intersection dimension and either the module or its dual has finite Gorenstein dimension. Thus we combine and strengthen a number of results in the literature, due to Auslander-Ding-Solberg, Dey-Ghosh and Rubio-P\'{e}rez.
\end{abstract}
\maketitle

\section{Introduction}
    Throughout the article, all rings are assumed to be commutative Noetherian, and all modules are assumed to be finitely generated. Let $R$ be a ring.

This article is inspired by one of the most celebrated and long-standing conjectures in homological commutative algebra, which is due to Auslander-Reiten \cite{AR75}.

\begin{conjecture}[Auslander-Reiten]\label{conj:ARC}
{~}\\
    An $R$-module $M$ is projective if $\Ext_R^i(M,M\oplus R)=0$ for all $i > 0$.
\end{conjecture}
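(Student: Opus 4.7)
Since the Auslander-Reiten conjecture remains open in full generality, my plan is to prove it in the two restricted settings announced in the abstract: (i) when $M^{\ast}=\Hom_R(M,R)$ has finite complete intersection dimension, and (ii) when $M$ is self-dual with $M^{\ast}$ of finite CI-dimension, together with finite Gorenstein dimension on $M$ or on $M^{\ast}$. The two cases share most of their machinery, so I would treat them in parallel.

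First I would reduce to the complete local case, exploiting that finite CI-dimension, finite G-dimension, $\Ext$-vanishing, and projectivity all behave well under faithfully flat extensions; so we may assume $(R,\fm)$ is a complete Noetherian local ring. The hypothesis $\Ext_R^i(M,R)=0$ for all $i>0$, fed into the Auslander--Bridger four-term exact sequence
\[
0\to\Ext_R^1(\operatorname{Tr}M,R)\to M\to M^{\ast\ast}\to\Ext_R^2(\operatorname{Tr}M,R)\to 0,
\]
forces $M$ to be reflexive. Dualising a free presentation of $M^{\ast}$ and invoking the change-of-rings results of Avramov--Gasharov--Peeva, together with the finite CI-dimension of $M^{\ast}$, should also yield $\Ext_R^i(M^{\ast},R)=0$ for all $i>0$.

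The decisive step is the $\Ext$-transfer: translating $\Ext_R^i(M,M)=0$ for $i>0$ into $\Ext_R^i(M^{\ast},M^{\ast})=0$ for $i>0$. Reflexivity yields $\Hom_R(M,M)\cong\Hom_R(M^{\ast},M^{\ast})$, and combined with the vanishings $\Ext_R^{>0}(M,R)=0$ and $\Ext_R^{>0}(M^{\ast},R)=0$ a dimension shift along a free resolution of $M^{\ast}$ promotes this isomorphism to vanishing of all positive $\Ext$'s between $M^{\ast}$ and itself. Once available, I would invoke the rigidity principle for modules of finite CI-dimension in the spirit of Jorgensen and Araya (building on Avramov--Buchweitz): if $N$ has finite CI-dimension and $\Ext_R^i(N,N)=0$ for $i\gg 0$, then $\pd_R N<\infty$. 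Applied to $N=M^{\ast}$, this gives $\pd_R M^{\ast}<\infty$; the Auslander--Buchsbaum formula, combined with reflexivity of $M$, then forces $\pd_R M<\infty$, and the vanishing $\Ext_R^{>0}(M,M)=0$ finally collapses this to $\pd_R M=0$, i.e.\ $M$ is free.

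The main obstacle I anticipate is the $\Ext$-transfer step in case (ii), where self-duality blurs the distinction between $M$ and $M^{\ast}$ and one cannot place independent homological hypotheses on the two factors. This is precisely where the auxiliary finite Gorenstein-dimension assumption intervenes: it supplies the total reflexivity needed to run the $\Ext$-transfer and, via a dual argument, to descend finite projective dimension from $M^{\ast}$ back to $M$. A secondary subtlety is that in case (i) the conclusion $\pd_R M^{\ast}<\infty$ does not formally imply $\pd_R M<\infty$ without a depth or duality input; here I would lean on the fact that $M$ is a reflexive syzygy of a module of finite projective dimension, and on the Auslander--Bridger formula $\gdim_R M+\depth M=\depth R$ supplied by the already-established reflexivity.
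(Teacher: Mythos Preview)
There are two substantial problems.

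\textbf{Misreading of case (ii).} You interpret ``self dual'' as $M\cong M^{\ast}$, but in the paper it means $\Hom_R(M,M)$. The actual second result (Theorem~\ref{thm:les-CI-dim-Hom(M,M)-G-dim-M}) assumes $\cidim_R(\Hom_R(M,M))<\infty$ together with $\gdim_R(M)<\infty$ or $\gdim_R(M^{\ast})<\infty$; under your reading, case (ii) collapses to $\cidim_R(M)<\infty$, which is already the Araya--Yoshino theorem and contributes nothing new. The paper's proof of Theorem~\ref{thm:les-CI-dim-Hom(M,M)-G-dim-M} is of a completely different nature from anything in your outline: one passes to a quasi-deformation $R\to R'\leftarrow S$, completes, and uses the Auslander--Ding--Solberg lifting theorem (valid because $\Ext^2_R(M,M)=0$) to lift $M$ to an $S$-module $N$; the vanishing of self-$\Ext$, the condition $\gdim=0$, and the finiteness of $\pd_S(\Hom_S(N,N))$ are then transferred to $N$, and one concludes by \cite[6.8]{DG}. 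Your proposal does not touch this argument.

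\textbf{Gap in the reflexivity step for case (i).} The claim that $\Ext_R^{>0}(M,R)=0$ ``fed into the Auslander--Bridger four-term sequence forces $M$ to be reflexive'' is false: that sequence involves $\Ext^{1}_R(\operatorname{Tr}M,R)$ and $\Ext^{2}_R(\operatorname{Tr}M,R)$, not $\Ext^{1,2}_R(M,R)$, and vanishing of the latter says nothing directly about the former. Reflexivity (indeed $\gdim_R(M)=0$) \emph{does} hold here, but it requires the additional input $\gdim_R(M^{\ast})<\infty$ and is exactly the content of \cite[4.1]{RP24}, which the paper invokes. Once this is repaired, your $\Ext$-transfer $\Ext^i_R(M^{\ast},M^{\ast})\cong\Ext^i_R(M,M)$ is correct (via $\Hom_R(G_n,M^{\ast})\cong\Hom_R(M,G_n^{\ast})$ and a dimension shift along the acyclic dual of a free resolution of $M^{\ast}$), and Avramov--Buchweitz applied to $M^{\ast}$ finishes. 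This is a genuinely different route from the paper's: rather than moving the self-$\Ext$ vanishing to $M^{\ast}$, the paper moves the finite CI-dimension to $M$, proving directly that $\cidim_R(M)=0$ (Proposition~\ref{prop:ci-dim-M*-M}) by working over a quasi-deformation and using the projective-dimension criterion of Proposition~\ref{prop:les-proj-dim-finite}; it then applies Avramov--Buchweitz to $M$ itself. The paper's detour buys the standalone equivalence $\cidim_R(M)=0\Leftrightarrow\cidim_R(M^{\ast})=0$ under $\Ext_R^{1\le i\le t}(M,R)=0$, which is of independent interest; your route, once patched, would give a shorter proof of Theorem~\ref{thm:les-ARC-finite-CI-dimension-M^*} alone but not this equivalence.
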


\begin{para}\label{survey}
    Conjecture~\ref{conj:ARC} is known to hold true in the following cases: (1)~$R$ is complete intersection, due to Auslander-Ding-Solberg \cite[1.9]{ADS93}. (2)~$R$ is normal, due to Kimura-Otake-Takahashi \cite[3.13.(3)]{KOT22} under the assumption that $R$ is Cohen-Macaulay, and \cite[1.2]{Kim23} in general by Kimura. (3)~$M$ has finite complete intersection dimension (in short, CI-dimension), a result by Araya-Yoshino \cite[4.3]{AY98}. (4)~$\Hom_R(M,M)$ or $M^*:=\Hom_R(M,R)$ has finite injective dimension, due to Ghosh-Takahashi \cite[1.3]{GT24}. (5)~$M^*$ has finite projective dimension, by Dey-Ghosh \cite[6.9.(1)]{DG}. (6)~$M$ has finite Gorenstein dimension, and $\Hom_R(M,M)$ has finite projective dimension \cite[6.9.(2)]{DG}. (7)~$M^*$ has finite Gorenstein dimension and $\Hom_R(M,M)$ has finite projective dimension, due to Rubio-P\'{e}rez \cite[5.12]{RP24}. However, the conjecture is broadly open even for Gorenstein local rings. The main aim of this article is to combine and strengthen most of these results to CI-dimension.
\end{para}

The notion of CI-dimension is introduced by Avramov-Gasharov-Peeva in \cite{AGP97}. It is well known that the class of modules having finite CI-dimension contains all modules of finite projective dimension and all modules over complete intersection rings, cf.~\ref{para:G-CI-proj-dim} and \ref{AGP-1.3}. Motivated by this, we prove \Cref{conj:ARC} when $M^*$ has finite CI-dimension, which unifies and extends the results (1) and (5) in \ref{survey}.

\begin{customtheorem}{\ref{thm:les-ARC-finite-CI-dimension-M^*}}\label{thm-1st-main}
    Let $R$ be a local ring of depth $t$, and $M$ be an $R$-module such that
        $\Ext_R^{1\le i \le t}(M,R) = 0$, and $\Ext_R^{2j}(M,M) = 0$ for some $j \ge 1$.
    Suppose
    $\cidim_R(M^*)$ is finite. Then $M$ is free.
\end{customtheorem}

In \cite[Theorem~4.3]{AY98}, Araya-Yoshino proved that for a module $M$ over a local ring $R$, if $\cidim_R(M)$ is finite and $\Ext_R^j(M,M) = 0$ for all $j \ge 1$, then $M$ is free. \Cref{thm:les-ARC-finite-CI-dimension-M^*} provides a variation of the result of Araya-Yoshino. Note that \Cref{thm-1st-main} does not follow from \cite[4.3]{AY98}. Trivial examples can be constructed for which $\cidim_R(M)$ is possibly infinite even when $\cidim_R(M^*)=0$, cf.~\Cref{exam:ci-dim-M*-M}.

Next, we look at the case when $\Hom_R(M, M)$ has finite CI-dimension, and either $M$ or $M^*$ has finite Gorenstein dimension. We have the subsequent outcome, which confirms that \Cref{conj:ARC} holds true in this situation as well. This combines and considerably strengthens results (1), (6) and (7) in \ref{survey}.
There are examples of modules $M$ over a local ring $R$ (which is not necessarily a complete intersection ring)
such that $\cidim_R(M)$, $\cidim_R(M^*)$ and $\cidim_R(\Hom_R(M, M))$ are finite, but both projective and injective dimensions of $M$, $M^*$ and $\Hom_R(M, M)$ are not finite; see \Cref{ex:CI-dim-zero-pd-infinite}. Thus, the present paper achieves something newer than that of the previous papers, such as \cite[1.3]{GT24}, \cite[6.9]{DG} and \cite[5.12]{RP24}. 

\begin{customtheorem}{\ref{thm:les-CI-dim-Hom(M,M)-G-dim-M}}
    Let $R$ be a local ring of depth $t$, and $M$ be an $R$-module such that $\Ext_R^{1\le i\le t}(M,R)=0$, $\Ext_R^{j \ge 1}(M,M)=0$ and $\cidim_R(\Hom_R(M,M))$ is finite. Suppose at least one of $\gdim_R(M)$ and $\gdim_R(M^*)$ is finite. Then $M$ is free.
\end{customtheorem}

As consequences of Theorems~\ref{thm:les-ARC-finite-CI-dimension-M^*} and \ref{thm:les-CI-dim-Hom(M,M)-G-dim-M}, we obtain the following.

\begin{corollary}\label{cor:ARC}
    Over a commutative Noetherian ring $R$, the Auslander-Reiten conjecture holds true for a module $M$ in each of the following cases:
    \begin{enumerate}[\rm (1)]
        \item $\cidim_R(M^*)$ is finite.
        \item $\cidim_R(\Hom_R(M,M))$ and $\gdim_R(M)$ are finite.
        \item $\cidim_R(\Hom_R(M,M))$ and $\gdim_R(M^*)$ are finite.
    \end{enumerate}
\end{corollary}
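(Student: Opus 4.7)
The plan is to deduce the corollary from Theorems~\ref{thm:les-ARC-finite-CI-dimension-M^*} and \ref{thm:les-CI-dim-Hom(M,M)-G-dim-M} by a routine reduction to the local case. Since a finitely generated module over a commutative Noetherian ring is projective if and only if its localization at every maximal ideal is free, it suffices to verify that $M_\fm$ is $R_\fm$-free for each $\fm \in \MaxSpec(R)$.

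Fix such an $\fm$ and set $t := \depth(R_\fm)$. Because $M$ is finitely generated, Ext commutes with localization, so the Auslander--Reiten hypothesis $\Ext_R^i(M, M \oplus R) = 0$ for all $i > 0$ yields $\Ext_{R_\fm}^i(M_\fm, R_\fm) = 0$ and $\Ext_{R_\fm}^i(M_\fm, M_\fm) = 0$ for every $i \ge 1$. In particular, the Ext-vanishing requirements over $R_\fm$ of both theorems (including the single-index condition $\Ext_{R_\fm}^{2j}(M_\fm, M_\fm) = 0$ for some $j \ge 1$ in Theorem~\ref{thm:les-ARC-finite-CI-dimension-M^*}) are automatically met.

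It remains to transfer the finiteness hypotheses on dimensions. For this I would invoke the natural isomorphisms $(M^*)_\fm \cong (M_\fm)^*$ and $\Hom_R(M,M)_\fm \cong \Hom_{R_\fm}(M_\fm, M_\fm)$ (valid because $M$ is finitely generated), together with the standard localization bounds $\cidim_{R_\fm}(N_\fm) \le \cidim_R(N)$ and $\gdim_{R_\fm}(N_\fm) \le \gdim_R(N)$ for any finitely generated $R$-module $N$. Applied with $N = M^*$ in case~(1), with $N = \Hom_R(M,M)$ and $N = M$ in case~(2), and with $N = \Hom_R(M,M)$ and $N = M^*$ in case~(3), these reduce each case of the corollary to the hypotheses of Theorem~\ref{thm:les-ARC-finite-CI-dimension-M^*} (for case~(1)) or of Theorem~\ref{thm:les-CI-dim-Hom(M,M)-G-dim-M} (for cases~(2) and~(3)) applied to $M_\fm$ over $R_\fm$. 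Those theorems then force $M_\fm$ to be free, as desired.

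No serious obstacle is expected: the real work has been done in the two theorems, and the corollary amounts to a careful bookkeeping check that localization preserves the Ext-vanishing, the Hom/dual constructions, and the finiteness of CI- and G-dimensions. The only mildly delicate point is the localization behaviour of $\cidim$ and $\gdim$, but this is standard from the definitions of these invariants.
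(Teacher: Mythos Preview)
Your proposal is correct and follows essentially the same route as the paper's own proof: reduce to the local case via localization, use that Ext, $\Hom$, and $(\,\cdot\,)^*$ commute with localization (for finitely generated modules), invoke the standard inequalities $\cidim_{R_\fm}(N_\fm)\le\cidim_R(N)$ and $\gdim_{R_\fm}(N_\fm)\le\gdim_R(N)$, and then apply Theorems~\ref{thm:les-ARC-finite-CI-dimension-M^*} and \ref{thm:les-CI-dim-Hom(M,M)-G-dim-M}. The paper cites \cite[(4.15)]{AB69} for the G-dimension localization and the definition of $\cidim$ for the CI-dimension localization, but otherwise the arguments coincide.
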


\section{Preliminaries}\label{Preliminaries}
In this section, we recall some terminologies, definitions and basic properties which are used throughout the article. Let $M$ and $N$ be $R$-modules. For integers $m$ and $n$, by $\Ext^{m \le i\le n}_R(M,N)=0$, we mean that $\Ext^i_R(M,N)=0$ for all $m \le i \le n$. Set $M^*:=\Hom_R(M,R)$,
and $M^{**}:=(M^*)^*$.

\begin{para}\label{para:M-*-syzygy-module-of-M}
    Let $R$ be local. For $n\ge 0$, $\Omega^n_R(M)$ denotes the $n$th syzygy module of $M$ in a minimal free resolution of $M$, i.e., if $\cdots \rightarrow F_2 \xlongrightarrow{d_2}F_1\xlongrightarrow{d_1}F_0 \rightarrow 0$ is a minimal free resolution of $M$, then $\Omega^n_R(M) := \coker d_{n+1} $.
\end{para}
\begin{para}\label{para:quasi-deformation-CI-dimension}
    \cite[(1.1) and (1.2)]{AGP97} Let $R$ be a local ring, and $M$ be an $R$-module. A (codimension $c$) quasi-deformation of $R$ is a diagram of local ring homomorphisms $ R\rightarrow R' \leftarrow S$, where $R\rightarrow R' $ is flat, and $R' \leftarrow S$ is a (codimension $c$) deformation, i.e., $ R' \leftarrow S $ is surjective with kernel generated by a (length $c$) regular sequence. When $M \neq 0$, CI-dimension of $M$ is defined to be $$\cidim_R(M)=\inf\{\pd_S(M')-\pd_S(R') : R\rightarrow R'\leftarrow S \mbox{ is a quasi-deformation} \},$$
    where $M' := M \otimes_R R' $. Set $\cidim_R(0):=0$. For a module $M$ over a Noetherian ring $R$ (not necessarily local), CI-dimension of $M$ is defined by
    $$\cidim_R(M)=\sup\{\cidim_{R_{\mathfrak m}}(M_{\mathfrak m}) : \mathfrak m \in \Max(R)\}.$$
\end{para}

The notion of Gorenstein dimension (in short, G-dimension) is due to Auslander, and it is developed by Auslander and Bridger in \cite{AB69}.

\begin{para}
    An $R$-module $G$ is said to have Gorenstein dimension zero, 
    if $G$ is reflexive, and $\Ext^i_R(G, R) = \Ext^i_R(G^*, R) = 0$ for all $i \ge 1$. We write it as $\gdim_R(G)=0$. For an $R$-module $M$, $\gdim_R(M)$ is defined to be the infimum of all integers $n \ge 0$ such that there exists an exact sequence $0\rightarrow G_{n} \rightarrow G_{n-1}\rightarrow \cdots \rightarrow G_0\rightarrow M\rightarrow 0$, where $\gdim_R(G_i) = 0$ for all $0 \le i \le n$. If no such $n$ exists, then $\gdim_R(M)=\infty$.
\end{para}

\begin{para}\label{para:G-dim-finite}
    (\cite[Theorem~2.3.16]{Ch00}) For a non-zero $R$-module $M$, if $\gdim_R(M)$ is finite, then $\gdim_R(M)=\sup\{n : \Ext^n_R(M,R)\neq 0\}$.
\end{para}

\begin{para}\label{para:G-CI-proj-dim}
    For an $R$-module $M$, it is shown in \cite[1.4]{AGP97} that there are inequalities:
    $$\gdim_R(M)\le \cidim_R(M)\le \pd_R(M).$$ If one of these is finite, then it is equal to those to its left. Moreover, when $R$ is local, and any of these dimensions is finite, then it is equal to $\depth(R)-\depth(M)$, see \cite[(4.13.b)]{AB69}, \cite[1.4]{AGP97} and \cite[1.3.3]{BH98} respectively.
\end{para}

\begin{para}\label{AGP-1.3}
    (\cite[(1.3)]{AGP97} and \cite[(4.20)]{AB69}) For a local ring $(R,\fm,k)$,
    the following are equivalent:
		\begin{enumerate}[(1)]
			\item $R$ is complete intersection (resp., Gorenstein).
			\item $\cid_R(M) < \infty$ (resp., $\gdim_R(M) < \infty$) for every $R$-module $M$.
			\item $\cid_R(k) < \infty$ (resp., $\gdim_R(k) < \infty$).
		\end{enumerate}
\end{para}

Using a standard change of rings spectral sequence, one obtains the following.

\begin{lemma}\cite[Theorem~10.75]{Rot09}\label{lem:les-Ext-S-Ext-R}
Set $R := S/(x)$, where $x$ is an $S$-regular element. Let $M$ and $N$ be $R$-modules. Then, we have the following long exact sequence:
    \begin{equation*}
        \begin{split}
            0 \longrightarrow &\Ext^1_R(M,N)\longrightarrow \Ext^1_S(M,N) \longrightarrow \Ext^0_R(M,N)\longrightarrow \\
           & \hspace{4cm}\vdots\\
           & \Ext^i_R(M,N)\longrightarrow \Ext^i_S(M,N) \longrightarrow \Ext^{i-1}_R(M,N)\longrightarrow\\
           & \Ext^{i+1}_R(M,N)\longrightarrow \Ext^{i+1}_S(M,N) \longrightarrow \Ext^i_R(M,N)\longrightarrow \cdots.
        \end{split}
    \end{equation*}
\end{lemma}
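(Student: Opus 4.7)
The plan is to invoke the Cartan--Eilenberg change of rings spectral sequence
$$E_2^{p,q} = \Ext^p_R(M, \Ext^q_S(R, N)) \Longrightarrow \Ext^{p+q}_S(M,N),$$
available here because $R = S/(x)$ and $M,N$ are $R$-modules; it is the Grothendieck spectral sequence attached to the decomposition $\Hom_S(M,-) = \Hom_R(M,\Hom_S(R,-))$, which is legitimate since $\Hom_S(R,-)$ is right adjoint to restriction of scalars and therefore carries injective $S$-modules to injective $R$-modules.

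First I would identify the $E_2$-page by computing $\Ext^q_S(R,N)$. Since $x$ is $S$-regular, the complex $0 \to S \xrightarrow{x} S \to R \to 0$ is a free $S$-resolution of $R$. Applying $\Hom_S(-,N)$ and using that $xN=0$ (because $N$ is an $R$-module), both differentials vanish, yielding $\Ext^0_S(R,N) \cong N \cong \Ext^1_S(R,N)$ and $\Ext^q_S(R,N)=0$ for $q\ge 2$. Consequently the only nonzero rows of $E_2$ are $q=0$ and $q=1$, each equal to $\Ext^p_R(M,N)$.

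Because only two consecutive rows are nonzero, all higher differentials $d_r$ with $r\ge 3$ vanish and the spectral sequence collapses at $E_3$. Splicing the two-step filtration short exact sequence
$$0 \to E_\infty^{n,0} \to \Ext^n_S(M,N) \to E_\infty^{n-1,1} \to 0$$
with the identifications $E_\infty^{n,0} = \coker(d_2 \colon E_2^{n-2,1} \to E_2^{n,0})$ and $E_\infty^{n-1,1} = \ker(d_2 \colon E_2^{n-1,1} \to E_2^{n+1,0})$ yields a long exact sequence
$$\cdots \to E_2^{n,0} \to \Ext^n_S(M,N) \to E_2^{n-1,1} \xrightarrow{d_2} E_2^{n+1,0} \to \Ext^{n+1}_S(M,N) \to \cdots.$$
Substituting the row identifications above gives the sequence in the statement; the leading $0$ reflects $E_2^{-1,1}=0$ and the ensuing isomorphism $\Hom_R(M,N) \cong \Hom_S(M,N)$, which forces $\Ext^1_R(M,N) \hookrightarrow \Ext^1_S(M,N)$.

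The only real work, which is purely formal, lies in setting up the Cartan--Eilenberg/Grothendieck spectral sequence; once it is in place, the identification of the $E_2$-page is a one-line computation and the long exact sequence is mechanical. This is precisely the content of the cited Theorem~10.75 in Rotman's text, so no further subtleties arise.
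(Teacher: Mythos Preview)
Your proof is correct and follows exactly the approach the paper indicates: the paper does not give its own proof but simply cites Rotman's Theorem~10.75 and remarks that it follows from ``a standard change of rings spectral sequence,'' which is precisely the Cartan--Eilenberg/Grothendieck spectral sequence you set up and collapse.
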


\section{Main results}\label{Main results}


Here we prove Theorems~\ref{thm:les-ARC-finite-CI-dimension-M^*} and \ref{thm:les-CI-dim-Hom(M,M)-G-dim-M}. For that, first we prepare some lemmas.

\begin{lemma}\label{lem:les-vanishing-Ext-R-Ext-S}
    Let $S$ be a local ring. Set $R := S/(\textbf{x})$, where $ \textbf{x} := x_1,\dots, x_c$ is an $S$-regular sequence. Let $M$ be an $R$-module such that $\Ext^{m \le i\le n}_R(M,R)=0$ for some integers $m,n\ge 1$. Then $\Ext^{m+c \le i\le n}_S(M,S)=0$.
\end{lemma}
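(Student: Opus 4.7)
The plan is to proceed by induction on the length $c$ of the regular sequence. The base case $c=1$ does the real work, and the inductive step is essentially bookkeeping of the $+1$ shift that accumulates to $+c$.

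For the base case, set $R = S/(x)$ with $x$ an $S$-regular element. I would combine two change-of-rings inputs. First, apply Lemma~\ref{lem:les-Ext-S-Ext-R} with $N = R$. There, each $\Ext^i_S(M,R)$ sits in an exact piece flanked by $\Ext^i_R(M,R)$ and $\Ext^{i-1}_R(M,R)$, so the hypothesis $\Ext^{m\le i\le n}_R(M,R)=0$ forces $\Ext^i_S(M,R)=0$ whenever both $i$ and $i-1$ lie in $[m,n]$, i.e.\ for $m+1\le i\le n$. Second, apply $\Hom_S(M,-)$ to the short exact sequence $0\to S\xrightarrow{x}S\to R\to 0$. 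Because $M$ is an $R$-module, $x$ annihilates $\Ext^i_S(M,S)$, so multiplication by $x$ is zero there and the resulting long exact sequence collapses into short exact sequences
\[
0\longrightarrow \Ext^i_S(M,S)\longrightarrow \Ext^i_S(M,R)\longrightarrow \Ext^{i+1}_S(M,S)\longrightarrow 0.
\]
Feeding in the vanishing of $\Ext^i_S(M,R)$ for $m+1\le i\le n$ from the first step yields $\Ext^{m+1\le i\le n}_S(M,S)=0$, settling $c=1$.

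For the inductive step, write $R'' := S/(x_1,\dots,x_{c-1})$, so that $R = R''/(x_c)$ with $x_c$ an $R''$-regular element (since $\mathbf{x}$ is a regular sequence). The base case applied to the pair $(R'',R)$ gives $\Ext^{m+1\le i\le n}_{R''}(M,R'')=0$; then the inductive hypothesis applied to the pair $(S,R'')$ with the length-$(c-1)$ regular sequence $x_1,\dots,x_{c-1}$ gives $\Ext^{(m+1)+(c-1)\le i\le n}_S(M,S)=0$, which is exactly $\Ext^{m+c\le i\le n}_S(M,S)=0$.

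The only step that requires care is the base case, and within it the main subtlety is verifying that the $x$-action on $\Ext^i_S(M,S)$ is zero (which follows at once from $xM=0$ together with functoriality in the first argument) so that the long exact sequence actually splits into the short exact sequences above. Everything else is a matter of tracking the ranges of vanishing through the two change-of-rings sequences, and the combinatorics of the $+1$ shift is consistent with ranges being vacuous when $n<m$ or $n<m+c$.
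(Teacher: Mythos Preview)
Your proof is correct and follows essentially the same approach as the paper: induction on $c$, with the base case combining Lemma~\ref{lem:les-Ext-S-Ext-R} and the long exact sequence coming from $0\to S\xrightarrow{x}S\to R\to 0$. The only cosmetic difference is that where the paper concludes via Nakayama's lemma (surjectivity of $x$ on $\Ext^i_S(M,S)$ forces vanishing), you observe directly that $xM=0$ makes the $x$-action on $\Ext^i_S(M,S)$ zero, splitting the long exact sequence into short exact pieces; both arguments are valid and yield the same conclusion.
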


\begin{proof}
    We proceed by induction on $c$. When $c=0$, there is nothing to show.
    
    Let $c=1$. Then $R=S/(x_1)$. Since $\Ext^{m \le i\le n}_R(M,R)=0$, in view of Lemma~\ref{lem:les-Ext-S-Ext-R}, $\Ext^{m+1 \le i\le n}_S(M,R)=0$. Now, the short exact sequence
    $0\rightarrow S \xrightarrow{x_1} S \rightarrow R \rightarrow 0$
    induces an exact sequence  $\Ext^i_S(M,S)\xrightarrow{x_1} \Ext^i_S(M,S) \rightarrow \Ext^i_S(M,R)$
    for each $i\ge 0$. So, by Nakayama's lemma, it follows that $\Ext^{m+1 \le i\le n}_S(M,S)=0$.
    
    Let $c\ge 2$. Set $R' := S/(x_1,\dots,x_{c-1})$. Then $x_c$ is $R'$-regular, and $R$ is isomorphic to $R'/(x_c)$. Therefore, since $\Ext^{m \le i\le n}_R(M,R)=0$, by the previous case, one gets that
    $\Ext^{m+1 \le i\le n}_{R'}(M,R')=0$. So the claim follows by induction hypothesis.
\end{proof}

\begin{lemma}\label{lem:les-Ext-S-iso-Hom-R}
Let $S$ be a local ring. Set $R := S/(\textbf{x})$, where $ \textbf{x} := x_1,\dots, x_c$ is an $S$-regular sequence. Let $M$ be an $R$-module such that $\Ext^{1 \le i\le c+n}_R(M,R)=0$ for some integer $n \ge 1$. Then 
\begin{enumerate}[\rm (1)]
\item $\Ext^i_S(M,S)=0$ for all $i \in \{0,\dots,c+n\} \smallsetminus \{c\} $, and
\item $\Ext^c_S(M,S) \cong \Hom_R(M,R)$.
\end{enumerate}
\end{lemma}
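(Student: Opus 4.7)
The plan is to invoke the Cartan--Eilenberg change-of-rings spectral sequence
\[ E_2^{p,q} = \Ext_R^p\bigl(M,\,\Ext_S^q(R,S)\bigr) \;\Longrightarrow\; \Ext_S^{p+q}(M,S), \]
combined with a Koszul calculation of $\Ext_S^\bullet(R,S)$. Since $\textbf{x}$ is an $S$-regular sequence of length $c$, the Koszul complex $K_\bullet(\textbf{x};S)$ is a free $S$-resolution of $R$, and its self-duality computes $\Ext_S^q(R,S) = 0$ for $q \ne c$ while $\Ext_S^c(R,S) \cong R$ as an $R$-module. Hence the $E_2$-page is supported on the single row $q = c$, the spectral sequence degenerates at $E_2$, and one obtains a natural isomorphism $\Ext_S^{p+c}(M,S) \cong \Ext_R^p(M,R)$ for every $p \ge 0$, along with $\Ext_S^i(M,S) = 0$ for every $i < c$. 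Substituting the hypothesis $\Ext_R^{1 \le i \le c+n}(M,R) = 0$ then yields (2) from the case $p = 0$, the upper vanishing in (1) from $p = 1,\dots,n$, and the lower vanishing in (1) for free.

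Because spectral sequences have not appeared elsewhere in the paper, I would also sketch an alternative in the spirit of Lemma~\ref{lem:les-vanishing-Ext-R-Ext-S}: induct on $c$, quotienting by one $x_i$ at a time and iterating Lemma~\ref{lem:les-Ext-S-Ext-R}. The upper range of (1) is literally Lemma~\ref{lem:les-vanishing-Ext-R-Ext-S} applied with $m = 1$. The lower range uses only that $\textbf{x}$ annihilates $M$: in the base case $c = 1$, the long exact sequence of $0 \to S \xrightarrow{x_1} S \to R \to 0$ begins $0 \to \Hom_S(M,S) \xrightarrow{x_1} \Hom_S(M,S)$, and the multiplication by $x_1$ is simultaneously injective and zero, forcing $\Hom_S(M,S) = 0$; one then iterates up the tower $S \to S/(x_1) \to \cdots \to R$.

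The main obstacle in the inductive route is the isomorphism~(2). Writing $R_k := S/(x_1,\dots,x_k)$ with $R_0 = S$ and $R_c = R$, I would establish by induction on $c-k$ that $\Ext_{R_k}^{c-k}(M,R_k) \cong \Hom_R(M,R)$ together with vanishing of the neighbouring Ext-groups over $R_k$, using Lemma~\ref{lem:les-Ext-S-Ext-R} at each step to transport the isomorphism upward along the tower. The hypothesis $\Ext_R^{1 \le i \le c+n}(M,R) = 0$ with $n \ge 1$ provides enough adjoining vanishing in each five-term segment to force the connecting map to be an isomorphism. This index bookkeeping is routine but delicate, which is why I would prefer the spectral sequence argument.
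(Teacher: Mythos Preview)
Your spectral-sequence argument is correct and in fact yields the stronger statement $\Ext_S^{p+c}(M,S)\cong\Ext_R^p(M,R)$ for all $p\ge 0$, from which both (1) and (2) drop out at once. The paper takes a shorter, more elementary route: for the lower range of (1) it simply observes that $(\textbf{x})\subset\ann_S(M)$ forces $\depth(\ann_S(M),S)\ge c$, whence $\Ext_S^{0\le i\le c-1}(M,S)=0$; the upper range is exactly Lemma~\ref{lem:les-vanishing-Ext-R-Ext-S} with $m=1$, as you note; and for (2) it just quotes \cite[Lemma~1.2.4]{BH98}, which is the Rees-type isomorphism $\Ext_S^c(M,S)\cong\Hom_S(M,R)=\Hom_R(M,R)$ for an $S$-regular sequence annihilating $M$. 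So your inductive alternative for (2), while correct, is working harder than necessary: the whole tower argument is subsumed by a single citation. What your approach buys is self-containment and the full degree-shift isomorphism; what the paper's buys is brevity and avoidance of spectral-sequence machinery beyond the one-variable long exact sequence already recorded in Lemma~\ref{lem:les-Ext-S-Ext-R}.
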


\begin{proof}
   (1) Since $(x_1,\dots,x_c) \subset \ann_S(M)$, one has that $\depth(\ann_S(M),S) \ge c$. Consequently, $\Ext^{0\le i\le c-1}_S(M,S)=0$. As $\Ext^{1 \le i\le c+n}_R(M,R)=0$, by \Cref{lem:les-vanishing-Ext-R-Ext-S}, it follows that $\Ext^{c+1 \le i\le c+n}_S(M,S)=0$.
    
    (2) In view of \cite[Lemma~1.2.4]{BH98}, $\Ext_S^c(M,S) \cong \Hom_S(M,R) = \Hom_R(M,R)$.
\end{proof}

The following result is crucial in order to compare the finiteness of $\cidim_R(M)$ and $\cidim_R(M^*)$ in \Cref{prop:ci-dim-M*-M}.

\begin{proposition}\label{prop:les-proj-dim-finite}
    Let $R$ be a local ring of depth $t$, and $M$ be a non-zero $R$-module. Assume that $\Ext^i_R(M,R)=0$ for all $i\in \{0,\dots,c+t\} \smallsetminus \{c\} $, and $\pd_R(\Ext^c_R(M,R))$ is finite, for some integer $c\ge 0$. Then $\pd_R(M)=c$.
\end{proposition}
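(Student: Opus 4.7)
The plan is to induct on $c$, reducing the general case to the base case $c=0$ via syzygies. For the inductive step $c \ge 1$, set $K := \Omega^c_R(M)$. Using $M^* = 0$ (since $0 \in \{0,\dots,c+t\}\smallsetminus\{c\}$ when $c \ge 1$) together with $\Ext^j_R(M,R)=0$ for $1 \le j \le c-1$, I dualize the syzygy short exact sequences $0 \to \Omega^j_R(M) \to F_{j-1} \to \Omega^{j-1}_R(M) \to 0$ from a minimal free resolution $F_\bullet$ of $M$ and splice them into the long exact sequence
\[
0 \to F_0^* \to F_1^* \to \cdots \to F_{c-1}^* \to K^* \to E \to 0,
\]
where $E := \Ext^c_R(M,R)$. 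The kernel of $K^* \twoheadrightarrow E$ then has a free resolution of length $\le c-1$ coming from the left half of this sequence, so together with $\pd_R(E) < \infty$ this gives $\pd_R(K^*) < \infty$. Dimension shifting gives $\Ext^i_R(K,R) = \Ext^{i+c}_R(M,R) = 0$ for $1 \le i \le t$. Thus $K$ satisfies the hypotheses of the base case (and $K \ne 0$, since otherwise $\pd_R(M) \le c-1$ and minimality would produce a nonzero $\Ext^j_R(M,R)$ with $j < c$, contradicting the hypothesis). Granting the base case, $K$ is free, so $\pd_R(M) \le c$; the reverse inequality $\pd_R(M) \ge c$ also follows from minimality: if $\pd_R(M) = j < c$, then $\Ext^j_R(M,R) \ne 0$ because the last differential of a minimal free resolution has entries in $\fm$.

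For the base case $c = 0$, the hypothesis reads $\Ext^i_R(M,R)=0$ for $1 \le i \le t$ and $\pd_R(M^*)<\infty$, and the goal is $M$ free. First I upgrade $\pd_R(M^*)$ to $0$. The dualized free resolution gives an exact sequence $0 \to M^* \to F_0^* \to F_1^* \to \cdots \to F_t^* \to B \to 0$ with $B := \Image(F_t^* \to F_{t+1}^*)$, and iteratively applying the depth lemma to the short exact sequences $0 \to K_i \to F_i^* \to K_{i+1} \to 0$ involving the successive kernels $K_i := \Image(F_{i-1}^* \to F_i^*)$ (with $K_{t+1} := B$) yields $\depth_R(K_i) \ge t-i+1$ for $1 \le i \le t$, and hence $\depth_R(M^*) \ge t$. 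Since $M \ne 0$ combined with the Ext-vanishing hypothesis forces $M^* \ne 0$ (by the grade inequality $\grade_R M \le \depth R = t$), one has $\depth_R(M^*) = t$, so the Auslander-Buchsbaum formula from \ref{para:G-CI-proj-dim} gives $\pd_R(M^*) = 0$.

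With $M^*$ free, the Auslander-Bridger $4$-term sequence $0 \to M^* \to F_0^* \to F_1^* \to \operatorname{Tr} M \to 0$ (where $\operatorname{Tr} M := \coker(F_0^* \to F_1^*)$ is the Auslander transpose of $M$) exhibits a length-$2$ free resolution of $\operatorname{Tr} M$, so $\gdim_R(\operatorname{Tr} M) < \infty$; by the classical identity $\gdim_R(M) = \gdim_R(\operatorname{Tr} M)$ from \cite{AB69}, $\gdim_R(M) < \infty$. The G-dimension Auslander-Buchsbaum formula from \ref{para:G-CI-proj-dim} then gives $\gdim_R(M) \le t$, which combined with the characterization $\gdim_R(M) = \sup\{i : \Ext^i_R(M,R) \ne 0\}$ from \ref{para:G-dim-finite} and the Ext-vanishing hypothesis forces $\Ext^i_R(M,R) = 0$ for all $i \ge 1$, so $\gdim_R(M) = 0$. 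Being of G-dimension zero, $M$ is reflexive, so $M \cong M^{**} = (M^*)^*$; since $M^*$ is free, so is $M$. The one delicate point is the appeal to $\gdim_R(M) = \gdim_R(\operatorname{Tr} M)$, which the paper's preliminaries do not develop explicitly; this is standard from \cite{AB69}, though one could alternatively avoid the transpose by iterating the depth-lemma step on successive syzygies of $M$ to extract the vanishing of $\Ext^i_R(M,R)$ in degrees $i > t$ directly.
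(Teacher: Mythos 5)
Your reduction of the case $c\ge 1$ to the case $c=0$ is correct and is essentially the paper's own argument in different packaging: your spliced sequence is the concatenation of the paper's sequences \eqref{eq:pd-image(d_c^*)} and \eqref{eq:Image-Ker-Ext}, the identification $K^*\cong\Ker d_{c+1}^*$ is the same, and both arguments end by feeding $\Omega_R^c(M)$ into the $c=0$ statement; your minimality argument for $\pd_R(M)\ge c$ is a fine substitute for the paper's grade computation. The difference is that the paper treats the $c=0$ case as a citation (to \cite[Proposition~3.15.(2)]{DG} or \cite[Lemma~3.3]{DEL21}), whereas you attempt to prove it, and your proof of the base case has a genuine gap. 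The first half (obtaining $\pd_R(M^*)=0$ from the dualized resolution, the depth lemma and Auslander--Buchsbaum) is fine. The problem is the step ``$\gdim_R(M)=\gdim_R(\operatorname{Tr}M)$, a classical identity from \cite{AB69}'': no such identity holds, and even the finiteness transfer $\gdim_R(\operatorname{Tr}M)<\infty\Rightarrow\gdim_R(M)<\infty$ is false. Concretely, let $R$ be a non-Gorenstein local ring with $\depth R\ge 1$ and $M=k$. The minimal presentation $R^n\xrightarrow{(x_1\cdots x_n)}R\to k\to 0$ dualizes to $0\to R\to R^n\to\operatorname{Tr}k\to 0$ (injective because $(0:_R\fm)=0$), so $\pd_R(\operatorname{Tr}k)\le 1$ and a fortiori $\gdim_R(\operatorname{Tr}k)\le 1$, while $\gdim_R(k)=\infty$ and $\operatorname{Tr}\operatorname{Tr}k\cong k$. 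What \cite{AB69} gives is only that $\gdim_R(M)=0$ \iff $\gdim_R(\operatorname{Tr}M)=0$; finiteness of G-dimension is not preserved by the transpose.

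The gap is not cosmetic: in your write-up the hypothesis $\Ext_R^{1\le i\le t}(M,R)=0$ is never used between establishing $\pd_R(\operatorname{Tr}M)\le 2$ and asserting $\gdim_R(M)<\infty$, and the example above shows that this passage cannot be made without it. Your fallback suggestion --- iterate the depth lemma to get $\Ext_R^{i}(M,R)=0$ for $i>t$ as well --- does not close it either: even the full vanishing $\Ext_R^{i\ge1}(M,R)=0$ together with $M^*$ free does not formally yield $\gdim_R(M)=0$, since one must also control $\Ext_R^{1}(\operatorname{Tr}M,R)$ and $\Ext_R^{2}(\operatorname{Tr}M,R)$ (equivalently, prove $M$ reflexive), and vanishing of $\Ext_R^{i\ge1}(M,R)$ alone is known not to force total reflexivity (Jorgensen--\c{S}ega). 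Everything after the assertion ``$\gdim_R(M)<\infty$'' in your base case is correct, so the cleanest repair is to do what the paper does and quote the $c=0$ statement, which is precisely \cite[Lemma~3.3]{DEL21} and \cite[Proposition~3.15.(2)]{DG}.
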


\begin{proof}
    When $c=0$, the result follows from \cite[Proposition~3.15.(2)]{DG}. So we may assume that $c\ge 1$. Consider a minimal free resolution of $M$ (along with the augmentation) \[ \cdots \longrightarrow F_2 \xlongrightarrow{d_2} F_1\xlongrightarrow{d_1} F_0 \xlongrightarrow{d_0} M \longrightarrow 0. \]
    Dualizing it, one gets a cochain complex
    $$ 0\longrightarrow M^* \xlongrightarrow{d_0^*} F_0^* \xlongrightarrow{d_1^*} F_1^* \longrightarrow \cdots \longrightarrow F_{c-1}^* \xlongrightarrow{d_c^*} F_c^* \xlongrightarrow{d_{c+1}^*} F_{c+1}^* \xlongrightarrow{d_{c+2}^*} \cdots.$$
    By the given conditions, the above complex gives us the following exact sequences:
    \begin{align}
        &0\longrightarrow F_0^* \xlongrightarrow{d_1^*}F_1^* \xlongrightarrow{d_2^*}\cdots\xlongrightarrow{d_{c-1}^*}F_{c-1}^*\longrightarrow \Image d_c^* \longrightarrow 0 \label{eq:pd-image(d_c^*)}\\
        &0 \longrightarrow \Image d_c^* \longrightarrow \Ker d_{c+1}^* \longrightarrow \Ext^c_R(M,R) \longrightarrow 0 \label{eq:Image-Ker-Ext}\\
        &0 \longrightarrow \Image d_{c+1}^* \longrightarrow F_{c+1}^* \xlongrightarrow{d_{c+2}^*} F_{c+2}^* \longrightarrow \cdots \longrightarrow F_{c+t}^* \xlongrightarrow{d_{c+t+1}^*} F_{c+t+1}^*. \label{eq:Image(d_c^*)-syzygy-mod}
    \end{align}
    In view of \eqref{eq:pd-image(d_c^*)}, $\pd_R(\Image d_c^*)<\infty$. Thus, since $\pd_R(\Ext^c_R(M,R)) < \infty$, the exact sequence \eqref{eq:Image-Ker-Ext} yields that $\pd_R(\Ker d_{c+1}^*)<\infty$. Consequently, as $F^*_c/\Ker d^*_{c+1}$ is isomorphic to $\Image d^*_{c+1}$, it follows that $\pd_R(\Image d^*_{c+1})< \infty$. Now from the sequence \eqref{eq:Image(d_c^*)-syzygy-mod}, one obtains that $\Image d_{c+1}^*$ is a $(t+1)$st syzygy module. So $\depth(\Image d_{c+1}^*) \ge \min\{t+1,\depth(R)\} = \depth(R)$. Therefore, by the Auslander-Buchsbaum formula, $\pd_R(\Image d_{c+1}^*)=0$, i.e., $\Image d_{c+1}^*$ is free. As a result, the short exact sequence $0\rightarrow \Ker d_{c+1}^* \rightarrow F_c^*\xlongrightarrow{d_{c+1}^*} \Image d_{c+1}^* \rightarrow 0 $ splits, and hence $\Ker d_{c+1}^*$ is also free. Now dualizing the exact sequence 
    \[ F_{c+1} \xlongrightarrow{d_{c+1}} F_c \rightarrow \Image d_c \rightarrow 0,\]
    there is another exact sequence $0 \rightarrow (\Image d_c)^* \rightarrow F_c^* \xlongrightarrow{d_{c+1}^*} F_{c+1}^*$, which induces that $(\Image d_c)^*\cong \Ker d_{c+1}^*$. Thus, since $\Ker d_{c+1}^*$ is free, $(\Image d_c)^*$ is also free. Note that $\Ext^j_R(\Image d_c,R) = \Ext^j_R(\Omega_R^c(M),R) \cong \Ext^{j+c}_R(M,R)=0$ for all $1\le j\le t$. Therefore, by \cite[Lemma~3.3]{DEL21} or \cite[Proposition~3.15.(2)]{DG}, we have $\Omega_R^c(M)=\Image d_c$ is free. So $\pd_R(M) \le c$. Since $M$ is non-zero, $\ann_R(M)$ is a proper ideal of $R$, and hence $0\le \depth(\ann_R(M),R)\le t$. Note that $\depth(\ann_R(M),R) = \inf\{i:\Ext^i_R(M,R)\neq 0\}$. Therefore, since $\Ext^i_R(M,R)=0$ for all $i\in \{0,\dots,c+t\} \smallsetminus \{c\} $, it follows that $\Ext^c_R(M,R)\neq 0$. Hence $\pd_R(M) \ge c$. Thus $\pd_R(M) = c$. 
\end{proof}


In the following proposition, the implication (1) $\Rightarrow$ (3) is due to Bergh-Jorgensen \cite[Lemma~3.5]{BJ11}. Our contribution here is the implication (4) $\Rightarrow$ (1). As a consequence, we also recover (1) $\Rightarrow$ (3).

\begin{proposition}\label{prop:ci-dim-M*-M}
    Let $R$ be a local ring of depth $t$, and $M$ be an $R$-module. Then the following are equivalent:
    \begin{enumerate}[\rm (1)]
        \item $\cidim_R(M)=0$.
        \item $\cidim_R(M)< \infty$ and $\Ext_R^{1\le i \le t}(M,R) = 0$.
        \item $\cidim_R(M^*)=0$ and $\Ext_R^{1\le i \le t}(M,R) = 0$.
        \item $\cidim_R(M^*)< \infty$ and $\Ext_R^{1\le i \le t}(M,R) = 0$.
    \end{enumerate}
\end{proposition}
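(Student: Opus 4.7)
I would close the cycle $(1) \Rightarrow (2) \Rightarrow (1)$ and $(1) \Rightarrow (3) \Rightarrow (4) \Rightarrow (1)$, the content being concentrated in the last implication. The four easy links are formal: $(1) \Rightarrow (2)$ holds because $\cidim_R(M)=0$ forces $\gdim_R(M)=0$ by \ref{para:G-CI-proj-dim}, whence $\Ext^{i\ge 1}_R(M,R)=0$; $(2) \Rightarrow (1)$ follows from the observation that a finite $\cidim$ coincides with $\gdim$ (by \ref{para:G-CI-proj-dim}), equals $\sup\{i:\Ext^i_R(M,R)\ne 0\}$ by \ref{para:G-dim-finite}, and is bounded above by $\depth R=t$, so the hypothesis $\Ext^{1\le i\le t}_R(M,R)=0$ pins this supremum at $0$; $(1) \Rightarrow (3)$ is Bergh--Jorgensen \cite[Lemma~3.5]{BJ11}; and $(3) \Rightarrow (4)$ is trivial.

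For the substantive implication $(4) \Rightarrow (1)$, the plan is to leverage a quasi-deformation for $M^*$. Pick $R\to R'\leftarrow S$ with $R'=S/(\mathbf{x})$ for an $S$-regular sequence $\mathbf{x}=x_1,\dots,x_c$, realizing $\pd_S((M^*)')<\infty$; note $(M^*)'\cong \Hom_{R'}(M',R')$ by flatness of $R\to R'$. After refining in the AGP style so that the closed fibre $R'/\mathfrak{m}R'$ has depth zero (and hence $\depth R'=t$ and $\depth S=t+c$), flatness transfers $\Ext^{1\le i\le t}_R(M,R)=0$ to $\Ext^{1\le i\le t}_{R'}(M',R')=0$. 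Applying \Cref{lem:les-Ext-S-iso-Hom-R} with $n=t-c$ (assume $c\le t$ WLOG, since otherwise $\cidim_R(M^*)\le t$ forces a lower-codimension quasi-deformation) yields
\[
\Ext^i_S(M',S)=0 \text{ for } i\in\{0,\dots,c+t\}\smallsetminus\{c\}, \qquad \Ext^c_S(M',S)\cong (M^*)',
\]
and the right-hand module has finite $S$-projective dimension. I would then reproduce the dualization/syzygy argument in the proof of \Cref{prop:les-proj-dim-finite}, now on a minimal $S$-free resolution of $M'$: the three analogues of the exact sequences appearing there force $\Image d_c^{\ast S}$, $\Ker d_{c+1}^{\ast S}$ and $\Image d_{c+1}^{\ast S}$ to have finite $S$-projective dimension, and then \cite[Lemma~3.3]{DEL21} (equivalently \cite[Proposition~3.15.(2)]{DG}) applied to $\Omega^c_S(M')$ over $S$, using the translated vanishing $\Ext^{1\le j\le t}_S(\Omega^c_S(M'),S)=\Ext^{c+j}_S(M',S)=0$, forces $\Omega^c_S(M')$ to be $S$-free. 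This gives $\pd_S(M')\le c=\pd_S(R')$, and hence $\cidim_R(M)\le 0$, i.e., $\cidim_R(M)=0$.

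The main obstacle is a $c$-index gap: a black-box appeal to \Cref{prop:les-proj-dim-finite} over $S$ would require $\Ext$-vanishing up to $c+\depth(S)=t+2c$, whereas the change-of-rings bookkeeping above only yields vanishing up to $c+t$. The refinement ensuring $\depth R'=t$ removes the potentially larger depth-gap coming from the closed fibre, but a residual $c$-step shortfall remains, and it must be absorbed by a hands-on depth/Auslander--Buchsbaum analysis of the image modules appearing in the dualized $S$-complex (they are $(t+1)$-th $S$-syzygies of depth $\ge t+1$, pinning their $S$-projective dimensions at $\le c-1$, and so on) rather than a direct invocation of the proposition. Handling this depth bookkeeping---so that what remains to verify matches exactly the mild hypothesis of \cite[Lemma~3.3]{DEL21} (which needs only the first $t$ $\Ext$-vanishings)---is the real heart of the argument.
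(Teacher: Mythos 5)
Your treatment of the three easy links agrees with the paper: $(1)\Leftrightarrow(2)$ via \ref{para:G-CI-proj-dim} and \ref{para:G-dim-finite}, $(3)\Rightarrow(4)$ trivially, and $(1)\Rightarrow(3)$ by citing Bergh--Jorgensen (the paper instead re-derives it from its own $(4)\Rightarrow(1)$, but quoting \cite[Lemma~3.5]{BJ11} is legitimate). The problem is $(4)\Rightarrow(1)$, where your argument has a genuine gap that you half-acknowledge but do not close.

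The missing idea is this: before passing to any quasi-deformation, the paper applies \cite[Theorem~4.1]{RP24} --- finiteness of $\gdim_R(M^*)$ (which follows from $\cidim_R(M^*)<\infty$ by \ref{para:G-CI-proj-dim}) together with $\Ext_R^{1\le i\le t}(M,R)=0$ forces $\gdim_R(M)=0$, hence $\Ext_R^{i\ge 1}(M,R)=0$ for \emph{all} $i\ge 1$. With unlimited $\Ext$-vanishing over $R$, hence over $R'$, \Cref{lem:les-Ext-S-iso-Hom-R} applies for every $n$ and gives $\Ext^i_S(M',S)=0$ for all $i\ne c$; then \Cref{prop:les-proj-dim-finite} over $S$ applies verbatim. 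You skip this upgrade and work only with the vanishing in the window $1\le i\le t$. Two things then go wrong. First, an index slip: applying \Cref{lem:les-Ext-S-iso-Hom-R} with $c+n=t$ yields $\Ext^i_S(M',S)=0$ only for $i\in\{0,\dots,t\}\smallsetminus\{c\}$, not for $i\in\{0,\dots,c+t\}\smallsetminus\{c\}$ as you assert (the lemma's conclusion has the same upper bound as its hypothesis). Second, and more seriously, the range actually needed to run the dualization argument over $S$ is $i\le c+\depth(S)=t+2c$, and your proposed ``hands-on depth/Auslander--Buchsbaum analysis'' does not bridge this: with exactness of the dualized complex only through degree $t$, the module $\Image d_{c+1}^{\ast S}$ is at best a $(t-c)$-th syzygy over $S$, so $\depth_S(\Image d_{c+1}^{\ast S})\ge t-c$ and Auslander--Buchsbaum only gives $\pd_S(\Image d_{c+1}^{\ast S})\le 2c$, which does not make it free and does not feed \cite[Lemma~3.3]{DEL21} (whose hypothesis over $S$ requires vanishing of $\Ext^{c+1\le i\le t+2c}_S(M',S)$). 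The aside ``assume $c\le t$ WLOG'' is also unjustified as stated --- the codimension of a quasi-deformation witnessing $\cidim_R(M^*)<\infty$ is not bounded by $t$. All of these issues evaporate once one first establishes $\gdim_R(M)=0$ via \cite[Theorem~4.1]{RP24}; without that step the proof is incomplete.
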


\begin{proof}
    (4) $\Rightarrow$ (1):
    We may assume that $M$ is non-zero. Since $\cidim_R(M^*)< \infty$, in view of \ref{para:G-CI-proj-dim}, $\gdim_R(M^*)< \infty$. Hence, by \cite[Theorem~4.1]{RP24}, $\gdim_R(M)=0$. So, in particular, $\Ext_R^{i\ge 1}(M,R) = 0$. Since $\cidim_R(M^*)< \infty$, there is a quasi-deformation $R\rightarrow R' \leftarrow S$ such that $\pd_S(\Hom_R(M,R) \otimes_{R} R')<\infty$. Note that $R' = S/(\textbf{x})$ for some $S$-regular sequence $\textbf{x}:=x_1,\dots, x_c$. Since $R\rightarrow R'$ is a flat ring homomorphism, it follows that $\Ext_{R'}^{i \ge 1}(M',R') = 0$, and $\pd_S(\Hom_{R'}(M',R'))<\infty$, where $M'$ denotes the $R'$-module $M \otimes_{R} R'$. Therefore, by \Cref{lem:les-Ext-S-iso-Hom-R}, one obtains that $\Ext^i_S(M',S)=0$ for all $i\neq c$, and $\pd_S(\Ext^c_S(M',S))<\infty$. Thus \Cref{prop:les-proj-dim-finite} results in $\pd_S(M')=c$. As $\pd_S(R')=c$, it follows that 
    \[ \pd_S(M')-\pd_S(R')=0,\]
    and hence $\cidim_R(M)=0$.

    (1) $\Leftrightarrow$ (2): In both the conditions, $\cidim_R(M)<\infty$. In this case, by \ref{para:G-CI-proj-dim} and \ref{para:G-dim-finite}, one has that
    $\cidim_R(M)=\gdim_R(M)=\sup\{n : \Ext^n_R(M,R)\neq 0\}\le \depth(R).$
    Hence $\Ext_R^{1\le i \le \depth(R)}(M,R) = 0$ \iff $\cidim_R(M)=0$.
    
    (1) $\Rightarrow$ (3):
    Since $\cidim_R(M)=0$, it follows that $\gdim_R(M)=0$. Therefore $M\cong M^{**}$, $\Ext_R^{i\ge 1}(M, R)=0$ and $\Ext_R^{i\ge 1}(M^*, R)=0$. Thus $\cidim_R(M^{**})=0$ and $\Ext_R^{i\ge 1}(M^*, R)=0$. So, by the implication (4) $\Rightarrow$ (1) above, $\cidim_R(M^*)=0$. 

    (3) $\Rightarrow$ (4): This implication is trivial.
\end{proof}

\begin{remark}\label{rmk:G-dim-0-equiv-conds}
    The counterpart of \Cref{prop:ci-dim-M*-M} for G-dimension holds true. Indeed, replacing CI-dimension by G-dimension in \Cref{prop:ci-dim-M*-M}, the forward implications are clear from the definition of G-dimension, while the implication (4) $\Rightarrow$ (1) is shown in \cite[Theorem~4.1]{RP24}.
\end{remark}

\begin{remark}
    The counterpart of \Cref{prop:ci-dim-M*-M} for projective dimension also holds true. The forward implications in this case are also trivial, and the implication (4)~$\Rightarrow$~(1) is proved in \cite[Proposition~3.15.(2)]{DG}.
\end{remark}

The following example ensures that the vanishing condition on $\Ext_R^{1\le i \le t}(M,R)$ cannot be removed or weakened from \Cref{prop:ci-dim-M*-M}.(3).

\begin{example}\label{exam:ci-dim-M*-M}
    Let $(R, \fm, k)$ be a local ring of depth $t\ge 2$. Suppose that $R$ is not a complete intersection ring. Then, $\Hom_R(k, R) = 0$ and $\Hom_R(\fm, R) \cong R$ both are free $R$-modules. However, none of $k$ and $\fm$ has finite CI-dimension. Note that $\Ext_R^{1\le i \le t-1}(k,R) = 0$ and $\Ext_R^t(k,R) \neq 0$.
\end{example}

Now we are in a position to prove the first main  result of this article.

\begin{theorem}\label{thm:les-ARC-finite-CI-dimension-M^*}
    Let $R$ be a local ring of depth $t$, and $M$ be an $R$-module such that
        $\Ext_R^{1\le i \le t}(M,R) = 0$, and $\Ext_R^{2j}(M,M) = 0$ for some $j \ge 1$.
    Suppose
    $\cidim_R(M^*)$ is finite. Then $M$ is free.
\end{theorem}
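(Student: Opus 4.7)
The plan is to first strengthen the assumption $\cidim_R(M^*)<\infty$ into the much stronger $\cidim_R(M)=0$, and then to convert the single vanishing $\Ext_R^{2j}(M,M)=0$ into freeness of $M$ via the theory of Eisenbud operators on a quasi-deformation.

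\emph{First reduction.} The hypotheses $\Ext_R^{1\le i\le t}(M,R)=0$ together with $\cidim_R(M^*)<\infty$ are precisely condition~(4) of \Cref{prop:ci-dim-M*-M}, whose implication $(4)\Rightarrow(1)$ yields $\cidim_R(M)=0$. By \ref{para:G-CI-proj-dim} this upgrades further to $\gdim_R(M)=0$ and $\depth_R(M)=t$; in particular $\Ext_R^{i\ge 1}(M,R)=0$.

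\emph{Quasi-deformation and reduction to an $R'$-problem.} Pick a quasi-deformation $R\to R'\leftarrow S$ witnessing $\cidim_R(M)=0$, so $R'=S/(\underline{x})$ for some $S$-regular sequence $\underline{x}=x_1,\dots,x_c$ and $\pd_S(M')=\pd_S(R')=c$, where $M':=M\otimes_R R'$. Because $R\to R'$ is a flat local homomorphism, it is faithfully flat, so $M$ is $R$-free iff $M'$ is $R'$-free; flat base change transports the relevant data to $R'$: one has $\gdim_{R'}(M')=0$, $\pd_S(M')<\infty$, and $\Ext_{R'}^{2j}(M',M')\cong\Ext_R^{2j}(M,M)\otimes_R R'=0$.

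\emph{Cohomology operators and finish.} Gulliksen's theorem makes $E:=\Ext_{R'}^{*}(M',M')$ a finitely generated graded module over the polynomial ring $\mathcal{R}:=R'[\chi_1,\dots,\chi_c]$ with $|\chi_i|=2$, via Eisenbud cohomology operators, which coincide with Yoneda multiplication by elements $\tilde{\chi}_i\in E^{2}$. Since $E^{2j}=0$, one reads off $\tilde{\chi}_i^{\,j}=0$ in $E$, hence the ideal $(\chi_1^{\,j},\dots,\chi_c^{\,j})$ annihilates $E$. Thus $E$ is finitely generated over the finite free $R'$-algebra $\mathcal{R}/(\chi_1^{\,j},\dots,\chi_c^{\,j})$, forcing $E^n=0$ for $n\gg 0$. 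The Avramov--Buchweitz identity $\cx_{R'}(M',M')=\cx_{R'}(M')$ then gives $\cx_{R'}(M')=0$, i.e., $\pd_{R'}(M')<\infty$; combined with $\depth(M')=\depth(R')$ (from $\gdim_{R'}(M')=0$) and the Auslander--Buchsbaum formula, we conclude $\pd_{R'}(M')=0$, so $M'$ (and therefore $M$) is free.

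The main obstacle is precisely this last paragraph: the passage from a single even-degree vanishing $\Ext_{R'}^{2j}(M',M')=0$ to eventual vanishing of $\Ext_{R'}^n(M',M')$, which requires the polynomial action of the cohomology operators on the Yoneda algebra together with the Avramov--Buchweitz support variety equality. The rest is bookkeeping with \Cref{prop:ci-dim-M*-M}, flat base change and the Auslander--Buchsbaum formula.
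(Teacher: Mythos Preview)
Your proposal is correct and follows essentially the same approach as the paper: first use \Cref{prop:ci-dim-M*-M} to upgrade the hypotheses to $\cidim_R(M)=0$, then convert the single even-degree self-Ext vanishing into freeness via Avramov--Buchweitz. The paper accomplishes the second step by citing \cite[Theorem~4.2]{AB00} as a black box, whereas you unpack its proof (Eisenbud operators on a quasi-deformation, Gulliksen finiteness, and the identity $\cx_{R'}(M',M')=\cx_{R'}(M')$); this is a difference in exposition only, not in strategy.
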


\begin{proof}
    By virtue of \Cref{prop:ci-dim-M*-M}, one obtains that $\cidim_R(M)=0$. Therefore \cite[Theorem~4.2]{AB00} yields that $\pd_R(M)$ is finite. Consequently, in view of \ref{para:G-CI-proj-dim}, $\pd_R(M)=\cidim_R(M)=0$, i.e., $M$ is free.
\end{proof}




In order to prove Theorem~\ref{thm:les-CI-dim-Hom(M,M)-G-dim-M}, we need the following elementary lemma.

\begin{lemma}\label{lem:les-Ext-S-Ext-S/(x)}
    Let $S$ be a local ring, and $x$ be an $S$-regular element. Let $L$ and $N$ be two $S$-modules such that $x$ is regular on both $L$ and $N$. For a fixed $i\ge 0$, suppose $\Ext^i_{S/(x)}(L/xL, N/xN)=0$. Then $\Ext^i_S(L,N)=0$. 
\end{lemma}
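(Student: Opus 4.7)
The plan is to reduce the vanishing of $\Ext^i_S(L,N)$ to the given vanishing over $R := S/(x)$ by first passing to $\Ext^i_S(L, N/xN)$, then using Nakayama's lemma.

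The first step is to exploit that $x$ is $L$-regular. If $F_\bullet \to L$ is an $S$-projective resolution, then $\Tor^S_{\ge 1}(L, R) = 0$ (by the regularity of $x$ on $L$, computed via the resolution $0 \to S \xrightarrow{x} S \to R \to 0$ of $R$), so $F_\bullet \otimes_S R \to L/xL$ is an $R$-projective resolution. Combined with the Hom-tensor adjunction $\Hom_S(F_\bullet, N/xN) \cong \Hom_R(F_\bullet \otimes_S R, N/xN)$ (valid because $N/xN$ is naturally an $R$-module), I would conclude the change-of-rings isomorphism
\[
  \Ext^i_S(L, N/xN) \;\cong\; \Ext^i_R(L/xL, N/xN),
\]
which vanishes by hypothesis.

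Next, I would apply $\Hom_S(L, -)$ to the short exact sequence
\[
  0 \longrightarrow N \xlongrightarrow{x} N \longrightarrow N/xN \longrightarrow 0
\]
(exact because $x$ is $N$-regular) to obtain the long exact sequence
\[
  \cdots \longrightarrow \Ext^i_S(L,N) \xlongrightarrow{x} \Ext^i_S(L,N) \longrightarrow \Ext^i_S(L, N/xN) \longrightarrow \cdots.
\]
Since the third term vanishes by the previous step, multiplication by $x$ on $\Ext^i_S(L,N)$ is surjective. Because $L$ and $N$ are finitely generated over the Noetherian ring $S$, the module $\Ext^i_S(L,N)$ is finitely generated; since $x$ is $S$-regular and $S$ is local, we have $x \in \fm_S$ (otherwise $x$ would be a unit and $S/(x) = 0$, making the hypothesis vacuous). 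Nakayama's lemma then forces $\Ext^i_S(L,N) = 0$.

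No serious obstacle is anticipated; the only care needed is in justifying the change-of-rings isomorphism, which could alternatively be cited from standard references or deduced directly from Lemma~\ref{lem:les-Ext-S-Ext-R} by observing that the odd-indexed terms $\Ext^{i-1}_S(L, N/xN)$ in that sequence vanish when $N$ is replaced appropriately; however, the direct approach via the $R$-projective resolution $F_\bullet \otimes_S R$ is the cleanest route.
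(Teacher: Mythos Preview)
Your proof is correct and follows essentially the same approach as the paper: establish the change-of-rings isomorphism $\Ext^i_S(L, N/xN) \cong \Ext^i_{S/(x)}(L/xL, N/xN)$, then use the long exact sequence from $0 \to N \xrightarrow{x} N \to N/xN \to 0$ together with Nakayama's lemma. The only difference is that the paper cites \cite[Lemma~2.(ii), p.~140]{Mat86} for the isomorphism, whereas you supply the standard direct argument via $F_\bullet \otimes_S R$.
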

\begin{proof}
    Since $x$ is both $S$-regular and $L$-regular, by \cite[Lemma~2.(ii), p.~140]{Mat86}, one has that $\Ext^n_S(L, N/xN) \cong \Ext^n_{S/(x)}(L/xL, N/xN)$ for all $n\ge 0$. Therefore, by the given condition, $\Ext^i_S(L, N/xN)=0$. Now the short exact sequence 
    \[ 0\rightarrow N\xrightarrow{x} N \rightarrow N/xN\rightarrow 0 \] 
    yields another exact sequence 
    \[ \Ext_S^i(L,N)\xrightarrow{x} \Ext_S^i(L,N) \rightarrow \Ext_S^i(L, N/xN).\] 
    As $\Ext^i_S(L, N/xN)=0$, by Nakayama's lemma, it follows that $\Ext_S^i(L,N)=0$. 
\end{proof}

Here is the second main result of this article.

\begin{theorem}\label{thm:les-CI-dim-Hom(M,M)-G-dim-M}
    Let $R$ be a local ring of depth $t$, and $M$ be an $R$-module such that $\Ext_R^{1\le i\le t}(M,R)=0$, $\Ext_R^{j \ge 1}(M,M)=0$ and $\cidim_R(\Hom_R(M,M))$ is finite. Suppose at least one of $\gdim_R(M)$ and $\gdim_R(M^*)$ is finite. Then $M$ is free.
\end{theorem}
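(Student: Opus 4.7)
My plan is to reduce to Theorem~\ref{thm:les-ARC-finite-CI-dimension-M^*} by establishing the single missing hypothesis there, namely that $\cidim_R(M^*)$ is finite; together with the Ext vanishing already assumed, that theorem then forces $M$ to be free.

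First, I would upgrade the finite G-dimension hypothesis to $\gdim_R(M)=0$. If $\gdim_R(M)<\infty$, then by \ref{para:G-dim-finite} and the Auslander--Bridger bound $\gdim_R(M)\le t$, the equality $\gdim_R(M)=\sup\{i:\Ext_R^i(M,R)\neq 0\}$ together with $\Ext_R^{1\le i\le t}(M,R)=0$ forces $\gdim_R(M)=0$. If instead $\gdim_R(M^*)<\infty$, then \cite[Theorem~4.1]{RP24} yields $\gdim_R(M)=0$, exactly as in the proof of the implication $(4)\Rightarrow(1)$ of Proposition~\ref{prop:ci-dim-M*-M}. Hence $M$ is totally reflexive: $M\cong M^{**}$ and $\Ext_R^{i\ge 1}(M,R)=\Ext_R^{i\ge 1}(M^*,R)=0$. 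Applying $\Hom_R(M,-)$ to a minimal free resolution $\cdots\to F_1\to F_0\to M\to 0$ and invoking $\Ext_R^{i\ge 1}(M,\Omega^k M)=\Ext_R^{i+k}(M,M)=0$ to preserve exactness at each step, I obtain a resolution
\[
\cdots\to (M^*)^{n_1}\to (M^*)^{n_0}\to \Hom_R(M,M)\to 0,
\]
whose $k$-th syzygy is $\Hom_R(M,\Omega^k_R M)$.

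Next, I would fix a quasi-deformation $R\to R'\leftarrow S$ witnessing the finiteness of $\cidim_R(\Hom_R(M,M))$: here $R'=S/(\mathbf{x})$ for an $S$-regular sequence $\mathbf{x}$ of length $c$, and $\pd_S(\Hom_{R'}(M',M'))<\infty$ with $M':=M\otimes_R R'$. Flat base change carries $\gdim_{R'}(M')=0$, $\Ext_{R'}^{i\ge 1}(M',M')=0$ and the $(M')^*$-resolution of the previous paragraph over to $R'$. Applying Lemma~\ref{lem:les-Ext-S-iso-Hom-R} to $M'$, and symmetrically to $(M')^*$ (using $\gdim_{R'}((M')^*)=0$), gives $\Ext_S^i(M',S)=0=\Ext_S^i((M')^*,S)$ for all $i\ne c$, together with $\Ext_S^c(M',S)\cong (M')^*$ and $\Ext_S^c((M')^*,S)\cong M'$. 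Proposition~\ref{prop:les-proj-dim-finite} then shows that each of $\pd_S(M')<\infty$ and $\pd_S((M')^*)<\infty$ forces the other (and both equal $c$), so that the desired $\cidim_R(M^*)<\infty$ is equivalent to $\pd_S((M')^*)<\infty$.

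The heart of the proof is therefore the implication $\pd_S(\Hom_{R'}(M',M'))<\infty\Rightarrow \pd_S((M')^*)<\infty$, extracted through the generally infinite $(M')^*$-resolution of $\Hom_{R'}(M',M')$. The plan is to set $N_k:=\Hom_{R'}(M',\Omega^k_{R'}M')$ and chase the short exact sequences $0\to N_{k+1}\to ((M')^*)^{n_k}\to N_k\to 0$ by the two-out-of-three property of finite $\pd_S$, combined with Lemma~\ref{lem:les-Ext-S-Ext-S/(x)} (to propagate Ext-vanishing across $\mathbf{x}$) and with the derived duality $\mathrm{RHom}_R(M,M)\cong\mathrm{RHom}_R(M^*,M^*)$ available for totally reflexive $M$ (so $\Ext_R^i(M,M)\cong\Ext_R^i(M^*,M^*)$, and the same resolution argument can be mirrored on $(M')^*$). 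This is the step I expect to be the main obstacle: a single short exact sequence does not close a two-out-of-three loop, and the analysis must be finished by coupling the symmetric resolution of $(M')^*$ with the explicit identification $\Ext_S^c(M',S)\cong (M')^*$ to pin down finiteness of $\pd_S$ at a controlled stage. Once $\pd_S((M')^*)<\infty$ is secured, Theorem~\ref{thm:les-ARC-finite-CI-dimension-M^*} applied to $M$ concludes that $M$ is free.
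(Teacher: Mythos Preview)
Your reduction to $\gdim_R(M)=0$ is correct and matches the paper. The gap is precisely where you flag it: the implication $\pd_S(\Hom_{R'}(M',M'))<\infty\Rightarrow\pd_S((M')^*)<\infty$ is never established, and your sketch does not close it. The $(M')^*$-resolution of $\Hom_{R'}(M',M')$ is infinite, so the two-out-of-three property for finite $\pd_S$ yields nothing: from $\pd_S(N_0)<\infty$ and $0\to N_1\to ((M')^*)^{n_0}\to N_0\to 0$ you cannot conclude anything about $\pd_S((M')^*)$ or $\pd_S(N_1)$ individually, and iterating to higher $k$ only adds unknowns. The ``symmetric resolution'' of $\Hom_{R'}((M')^*,(M')^*)$ has the same defect; it is a parallel infinite chain, and the identification $\Ext_S^c(M',S)\cong (M')^*$ does not by itself tie either chain off at a finite stage. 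No argument based solely on these resolutions and two-out-of-three can succeed here.

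The paper does not attempt to show $\cidim_R(M^*)<\infty$ and does not reduce to Theorem~\ref{thm:les-ARC-finite-CI-dimension-M^*}. After obtaining $\gdim_{R'}(M')=0$ and passing to completions, it invokes the lifting theorem of Auslander--Ding--Solberg \cite[Proposition~1.7]{ADS93}: since $\Ext_{R'}^2(M',M')=0$, there is an $S$-module $N$ with $N/\mathbf{x}N\cong M'$ and $\Tor_{i\ge 1}^S(N,R')=0$. One then checks that $\mathbf{x}$ is $N$-regular, that $\Ext_S^{j\ge 1}(N,N)=0$ via Lemma~\ref{lem:les-Ext-S-Ext-S/(x)}, that $\Hom_S(N,N)/\mathbf{x}\Hom_S(N,N)\cong\Hom_{R'}(M',M')$ so $\pd_S(\Hom_S(N,N))<\infty$, and that $\gdim_S(N)=0$. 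Now everything lives over $S$ with genuinely finite projective dimension of $\Hom_S(N,N)$, and \cite[Corollary~6.8]{DG} gives $N$ free, whence $M$ is free. The essential idea you are missing is this lifting step: it replaces the $R'$-module $M'$, which as an $S$-module has $\pd_S(M')=c$ regardless, by an honest $S$-module $N$ to which the projective-dimension criterion can be applied directly.
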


\begin{proof}
    If $\gdim_R(M^*)<\infty$, then by \cite[Theorem~4.1]{RP24}, $M$ is totally reflexive, i.e., $\gdim_R(M)=0$. So, in any case, we may assume that $\gdim_R(M)<\infty$. Then $\gdim_R(M) = \sup\{i : \Ext_R^i(M,R)\neq 0\}\le t$, cf.~\ref{para:G-dim-finite} and \ref{para:G-CI-proj-dim}. Therefore, since $\Ext_R^{1\le i\le t}(M,R)=0$, one concludes that $\gdim_R(M)=0$.
    
    Since $\cidim_R(\Hom_R(M,M))< \infty$, by the definition of CI-dimension, there exists a quasi-deformation $R\rightarrow R' \leftarrow S$ such that $\pd_S(\Hom_R(M,M) \otimes_{R} R')$ is finite. Note that $R' = S/(\textbf{x})$ for some $S$-regular sequence $\textbf{x}:=x_1,\dots, x_c$, and $R\rightarrow R'$ is a local flat homomorphism. Set $M':=M\otimes_RR'$. Then $\pd_S(\Hom_{R'}(M',M'))$ is finite. Since $\Ext_R^{j \ge 1}(M,M)=0$, one has that $\Ext_{R'}^{j \ge 1}(M',M')=0$. Moreover, $\gdim_{R'}(M')=0$ as $\gdim_R(M)=0$. Tensoring a minimal $R$-free resolution of $M$ by $R'$, one gets a minimal $R'$-free resolution of $M'$. So $\pd_R(M)=\pd_{R'}(M')$. Thus, we may replace $R'$ by $R$, and without loss of generality assume that $R=S/(\textbf{x})$.

    We complete $R=S/(\textbf{x})$ with respect to the maximal ideal of $S$, and assume that both $R$ and $S$ are complete local rings.
    Since $\Ext^2_R(M,M)=0$, in view of \cite[Proposition~1.7]{ADS93}, the $R$-module $M$ is liftable to $S$, i.e., there exists an $S$-module $N$ such that $N/\textbf{x}N\cong M$ and $\Tor^S_{i\ge 1}(N,R)=0$ (see \cite[page~276]{ADS93}). Thus $\Tor^S_{i\ge 1}(N,S/(\textbf{x}))=0$. Hence, since $\textbf{x}$ is an $S$-regular sequence, it is also $N$-regular, see, e.g., \cite[Lemma~6.1]{DG23}. So, as $\Ext_R^{j \ge 1}(M,M)=0$, by repeatedly applying Lemma~\ref{lem:les-Ext-S-Ext-S/(x)}, one obtains that $\Ext_S^{j \ge 1}(N,N)=0$. Consequently, in view of \cite[Lemma~3.1.(1)]{DG}, it follows that $\textbf{x}$ is regular on $\Hom_S(N,N)$, and $\Hom_S(N,N)/\textbf{x}\Hom_S(N,N) \cong \Hom_{R}(M, M)$. Thus $\pd_S(\Hom_S(N,N))<\infty$ as $\pd_S(\Hom_{R}(M, M))<\infty$, see, e.g., \cite[1.3.6]{BH98}. Since $\textbf{x}$ is regular on both $S$ and $N$, by \cite[Corollary~4.31]{AB69}, $\gdim_S(N)=\gdim_R(M)=0$. Therefore \cite[Corollary~6.8]{DG} yields that $N$ is $S$-free. Hence $\pd_R(M)=\pd_S(N)=0$ (cf.~\cite[1.3.5]{BH98}). Thus $M$ is $R$-free.
\end{proof}

The following example shows that in Theorems~\ref{thm:les-ARC-finite-CI-dimension-M^*} and \ref{thm:les-CI-dim-Hom(M,M)-G-dim-M}, the condition on self Ext vanishing cannot be omitted.

\begin{example}\label{ex:CI-dim-zero-pd-infinite}
    Let $S=T[[x]]$ or $S=T[x]$ (a formal power series ring or a polynomial ring in a variable $x$) over a local ring $T$. Set $R:=S/(x^n)$, where $n\ge 2$. Consider the $R$-module $M:=Rx$. Then, the following hold.
    \begin{enumerate}[\rm (1)]
        \item $\cidim_R(M)=0$, $\cidim_R(M^*)=0$ and $\cidim_R(\Hom_R(M,M))=0$.
        \item $\Ext_R^{i \ge 1}(M,R) = 0$, and $\Ext_R^j(M,M)\neq 0$ for every $j\ge  0$.
        \item 
        $\pd_R(M)=\infty=\id_R(M)$, $\pd_R(\Hom_R(M,M))=\infty=\id_R(\Hom_R(M,M))$, and $\pd_R(M^*)=\infty=\id_R(M^*)$.
        \item The ring $R$ is Gorenstein \iff $T$ is so.
    \end{enumerate}
\end{example}

\begin{proof}
    (1) Note that $M\cong R/(x^{n-1})\cong S/(x^{n-1})$. Considering the trivial quasi-deformation $R\rightarrow R \leftarrow S$, since $\pd_S(M)-\pd_S(R) = 1-1 = 0$, it follows that $\cidim_R(M)=0$. Next, we notice that
    \begin{align*}
        &M^* \cong \Hom_R(R/(x^{n-1}), R) \cong (0:_Rx^{n-1}) = Rx = M \quad\mbox{and}\\
        &\Hom_R(M,M)\cong \Hom_R(R/(x^{n-1}), M) \cong (0:_Mx^{n-1}) = M.
    \end{align*}
    Consequently, $\cidim_R(M^*)=0$ and $\cidim_R(\Hom_R(M,M))=0$.

    (2) Consider the minimal free resolution
    \begin{equation}\label{min-reso}
        \mathbb{F}_M: \quad\quad \cdots \rightarrow R \xrightarrow{x} R\xrightarrow{x^{n-1}} R\xrightarrow{x} R\xrightarrow{x^{n-1}} R \rightarrow 0
    \end{equation}
    of $M$ over $R$. It follows that $\pd_R(M)=\infty$. Dualizing \eqref{min-reso}, one computes that $\Ext_R^{i \ge 1}(M,R) = 0$. On the other hand, the cochain complex $\Hom_R(\mathbb{F}_M,M)$ is
    $$0 \rightarrow M \xrightarrow{x^{n-1}} M\xrightarrow{x}M\xrightarrow{x^{n-1}} M \xrightarrow{x} M \rightarrow \cdots.$$
    Computing its cohomologies, we observe that $\Ext_R^j(M,M)\neq 0$ for every $j\ge  0$ as
    \begin{equation*}
    \Ext_R^j(M,M)= 
    \begin{cases}
        Rx^{n-1}=x^{n-2}M & \mbox{$j\ge 1$ is odd}\\
        M/xM & \mbox{$j \ge 1$ is even.}
    \end{cases}
    \end{equation*}
    
    (3) Since $\Ext_R^j(M,M)\neq 0$ for every $j\ge 0$, and $\Hom_R(M,M)\cong M\cong M^*$, the desired equalities follow.
    
    (4) Note that $S$ is local, and $x^n$ is an $S$-regular element. So $R$ is Gorenstein \iff $S$ is so, which is equivalent to that $T$ is Gorenstein.
\end{proof}

\begin{remark}
    Example~\ref{ex:CI-dim-zero-pd-infinite} also ensures that the answer to \cite[Question~6.2]{RP24} is negative, in general.
\end{remark}

Finally, we give the following.

\begin{proof}[Proof of Corollary~\ref{cor:ARC}]
    In view of \cite[(4.15)]{AB69}, for every fixed $n\ge 0$, one has that $\gdim_R(M)\le n$ \iff $\gdim_{R_{\mathfrak m}}(M_{\mathfrak m})\le n$ for each $\mathfrak m \in \MaxSpec R$. On the other hand, by the definition of CI-dimension, $\cidim_{R_{\mathfrak m}}(M_{\mathfrak m}) \le \cidim_R(M)$ for all $\mathfrak m \in \MaxSpec R$. Note that $M$ is projective \iff $M_{\mathfrak m}$ is free for each $\mathfrak m \in \MaxSpec R$. Thus, replacing $R$ and $M$ by $R_{\fm}$ and $M_{\fm}$ respectively, we may assume that $R$ is local. Hence the corollary follows from Theorems~\ref{thm:les-ARC-finite-CI-dimension-M^*} and \ref{thm:les-CI-dim-Hom(M,M)-G-dim-M}.
\end{proof}

\subsection*{Acknowledgments}
The authors thank Souvik Dey for his suggestion on \Cref{exam:ci-dim-M*-M} and for his comments on the proof of Theorem~\ref{thm:les-CI-dim-Hom(M,M)-G-dim-M}. The authors also thank the anonymous referee for helpful suggestions. Mouma Samanta thanks the Government of India for financial support through the Prime Minister’s Research Fellowship for her Ph.D.


\begin{thebibliography}{AAAA}
	
	
\bibitem{AY98} T.~Araya and Y.~Yoshino, {\it Remarks on a depth formula, a grade inequality and a conjecture of Auslander}, Comm. Algebra {\bf 26} (1998), 3793--3806.
	
	
\bibitem{AB69} M. Auslander and M. Bridger, {\it Stable module theory}, Mem. Amer. Math. Soc. {\bf 94} (1969).

\bibitem{ADS93} M.~Auslander, S.~Ding and O.~Solberg, {\it Liftings and weak liftings of modules}, J. Algebra {\bf 156}, (1993), 273--317.
	
\bibitem{AR75} M.~Auslander and I.~Reiten, {\it On a generalized version of the Nakayama conjecture}, Proc. Amer. Math. Soc. {\bf 52} (1975), 69--74.
	
	

\bibitem{AB00} L.L.~Avramov and R-O.~Buchweitz, {\it Support varieties and cohomology over complete intersections}, Invent. Math. {\bf 142} (2000), 285--318.

	
 \bibitem{AGP97} L.~L.~Avramov, V.~N.~Gasharov and I.~V.~ Peeva, {\it Complete intersection dimension}, Inst. Hautes \'{E}tudes Sci. Publ. Math. {\bf 86} (1997), 67--114.
%

\bibitem{BJ11} P.A.~Bergh and D.A.~Jorgensen, {\it On the vanishing of homology for modules of finite complete intersection dimension}, J. Pure Appl. Algebra {\bf 215} (2011), 242--252.

\bibitem{BH98} W.~Bruns and J.~Herzog, \emph{Cohen-{M}acaulay rings}, Cambridge Studies in Advanced Mathematics, vol.~39, Cambridge University Press, Cambridge, 1998.
%

%
%
%
%
\bibitem{Ch00} L.W.~Christensen, {\it Gorenstein dimensions}, Lecture Notes in Mathematics {\bf 1747}, Springer-Verlag, Berlin, 2000.
%
%



\bibitem{DEL21} H.~Dao, M.~Eghbali and J.~Lyle, {\it Hom and Ext, revisited}, J. Algebra {\bf 571} (2021), 75--93.


    
\bibitem{DG23} S.~Dey and D.~Ghosh, {\it Complexity and rigidity of Ulrich modules, and some applications}, Math. Scand. {\bf 129} (2023), 209--237.

\bibitem{DG} S.~Dey and D.~Ghosh, {\it Finite homological dimension of Hom and vanishing of Ext}, \href{https://arxiv.org/pdf/2310.10607v3}{arXiv:2310.10607v3}.

	
	
%
%
%
%
%


 
	

\bibitem{GT24} D.~Ghosh and R.~Takahashi, {\it Auslander-Reiten conjecture and finite injective dimension of Hom},	Kyoto J. Math. {\bf 64} (2024), 229--243.




%
%

    
 
	


     
%
%
%
%
%
    

\bibitem{Kim23} K.~Kimura, {\it Auslander--Reiten conjecture for normal rings}, Isr. J. Math. (2025). DOI: \href{https://doi.org/10.1007/s11856-025-2877-0}{10.1007/s11856-025-2877-0}. 

\bibitem{KOT22} K.~Kimura, Y.~Otake and R.~Takahashi, {\it Maximal Cohen-Macaulay tensor products and vanishing of Ext modules}, Bull. Lond. Math. Soc. {\bf 54} (2022), 2456--2468.


	

%
	
\bibitem{Mat86} H.~Matsumura, {\it Commutative Ring Theory}, Cambridge University Press, Cambridge, 1986.

	


    

\bibitem{Rot09} J.J.~Rotman, \emph{An Introduction to Homological Algebra}, 2nd Edition, Universitext, Springer, New York, 2009.

\bibitem{RP24} V.D.M.~Rubio and V.H.J.~P\'{e}rez, {\it On modules whose dual is of finite Gorenstein dimension}, Collect. Math. (2025). DOI:\href{https://doi.org/10.1007/s13348-025-00466-y}
{10.1007/s13348-025-00466-y}.

%
%
%
	


	
	
%
%
\end{thebibliography}
\end{document}